\documentclass[11pt,reqno]{amsart}
\usepackage[utf8]{inputenc}
\usepackage[T1]{fontenc}
\usepackage{amsmath,amssymb,amsthm}
\usepackage[margin=3cm]{geometry}
\usepackage{enumitem}
\usepackage[protrusion=true,expansion=false]{microtype}

\theoremstyle{plain}
\newtheorem{theorem}{Theorem}[section]
\newtheorem{proposition}[theorem]{Proposition}
\newtheorem{lemma}[theorem]{Lemma}
\newtheorem{corollary}[theorem]{Corollary}
\theoremstyle{definition}
\newtheorem{definition}[theorem]{Definition}

\theoremstyle{remark}

\DeclareMathOperator{\ex}{exp}
\newcommand{\Sred}{\mathsf S_{\mathrm{red}}}
\newcommand{\Lamone}{\Lambda_{\mathbf1}}

\begin{document}

\title[The reduced Smith group of a Kneser graph]
{The reduced Smith group of a Kneser graph and its application to group-valued magic maps}

\author{Ahmet Batal}
\address{Department of Mathematics, Izmir Institute of Technology, 35430, Urla, Izmir, Turkey}
\email{ahmetbatal@iyte.edu.tr}

\subjclass[2020]{Primary 05C50; Secondary 05C78, 05C25, 05B20}
\keywords{Reduced Smith group, Kneser graph, affinely generating magic map,
$\Gamma$-distance magic labeling, abelian group, Smith normal form, Bier matrix,
weak Sidon set}

\begin{abstract}
For a finite abelian group $\Gamma$, a map $f\colon V(G)\to\Gamma$ is a
$\Gamma$-magic map if the sum of its values over the neighbors of a vertex is
independent of the vertex. It is \emph{affinely generating} if its pairwise
differences generate $\Gamma$. When $|\Gamma|=|V(G)|$, a bijective
$\Gamma$-magic map is a $\Gamma$-distance magic labeling.
For a regular graph $G$ with adjacency matrix $A$, write $\mathbf1$ for the
all-ones vector indexed by $V(G)$, and let $\overline A$ denote the
endomorphism induced by $A$ on
$\Lamone=\mathbb Z^{V(G)}/\mathbb Z\mathbf1$. When $\overline A$ is
nonsingular over $\mathbb Q$, define the reduced Smith group by
$\Sred(G)=\operatorname{coker}\overline A$.
In previous work, we established that for every regular graph $G$ of positive
degree with $\overline A$ nonsingular over $\mathbb Q$, one has
\[
  G\text{ admits an affinely generating }\Gamma\text{-magic map}
  \quad\Longleftrightarrow\quad
  \Gamma\hookrightarrow\Sred(G).
\]
We determine this group explicitly for Kneser graphs. If $r\ge1$, $n\ge2r$, and
$m_j=\binom nj-\binom n{j-1}$, then
\[
  \Sred(K(n,r))\cong
  \bigoplus_{j=1}^{r}
  \left(\mathbb Z/\binom{n-r-j}{r-j}\mathbb Z\right)^{m_j}.
\]
When the labeling group and the Kneser graph have the same order, the
resulting embedding criterion leaves only one case in which an affinely
generating map exists. More precisely, if $\Gamma$ is an abelian group of
order $\binom nr$, then $K(n,r)$ admits an affinely generating
$\Gamma$-magic map if and only if
 $(n,r)=(9,2)$
and
$\Gamma\cong\mathbb Z/6\mathbb Z\oplus\mathbb Z/6\mathbb Z$.
A weak-Sidon-set bound shows that the affinely generating maps in the
exceptional case cannot be bijective.
Hence $K(n,r)$ admits no $\Gamma$-distance magic labeling throughout the
range $r\ge1$ and $n\ge2r$.
\end{abstract}

\maketitle

\section{Introduction}\label{sec:intro}

Let $G$ be a finite simple graph with vertex set $V(G)$, and let $\Gamma$ be a
finite abelian group, written additively. For each $v\in V(G)$, let $N(v)$ denote the
set of neighbors of $v$ in $G$.

\begin{definition}\label{def:gdm}
A map $f\colon V(G)\to\Gamma$ is a \emph{$\Gamma$-magic map} if there exists
an element $\mu\in\Gamma$, called the \emph{magic constant}, such that
\[
  \sum_{u\in N(v)}f(u)=\mu
  \qquad\text{for every }v\in V(G).
\]
The map $f$ is \emph{affinely generating} if
\[
  \left\langle f(v)-f(w)\mid v,w\in V(G)\right\rangle=\Gamma.
\]
When $|\Gamma|=|V(G)|$, a bijective $\Gamma$-magic map is a
\emph{$\Gamma$-distance magic labeling}.
A graph admitting such a labeling is called \emph{$\Gamma$-distance magic}.
\end{definition}

Every $\Gamma$-distance magic labeling is affinely generating. Indeed, one
vertex receives the label $0$, and the differences from that label run through
all of $\Gamma$.

Group-valued distance-magic labelings were introduced in \cite{Fro13,CFSZ16}, whereas the notion of an affinely generating $\Gamma$-magic map was introduced in \cite{Bat26}. For the ordinary integer-valued theory of distance-magic labelings, we refer to the survey \cite{AFK11}.

For a regular graph with adjacency matrix $A$, write $\mathbf1$ for the
all-ones vector indexed by $V(G)$. The sublattice $\mathbb Z\mathbf1$ is
$A$-invariant, so the quotient
\[
  \Lamone:=\mathbb Z^{V(G)}/\mathbb Z\mathbf1
\]
has an induced integral endomorphism $\overline A$. For a
homomorphism $\varphi$, write $\operatorname{Im}\varphi$ for its image. When
$\overline A$ is nonsingular over $\mathbb Q$, define the reduced Smith group
of $G$ by
\[
  \Sred(G):=\operatorname{coker}\overline A
  =\Lamone/\operatorname{Im}\overline A.
\]
For regular graphs of positive degree, Corollary~1.3(b) of \cite{Bat26} gives
the following embedding criterion.
\[
  G\text{ admits an affinely generating }\Gamma\text{-magic map}
  \quad\Longleftrightarrow\quad
  \Gamma\hookrightarrow\Sred(G),
\]
where $\Gamma\hookrightarrow\Sred(G)$ means that there exists an injective
group homomorphism from $\Gamma$ to $\Sred(G)$.
The distance-magic condition also requires bijectivity, which is not captured
by the subgroup-embedding criterion above. The objective here is to determine
the reduced Smith group of a Kneser graph exactly.
Combining the resulting group description with the embedding criterion above,
we show that when $\Gamma$ and a Kneser graph have the same order, an affinely
generating $\Gamma$-magic map exists in only one case. A further obstruction
rules out a $\Gamma$-distance magic labeling in this exceptional case.

\begin{definition}\label{def:kneser}
For integers $r\ge1$ and $n\ge2r$, put $[n]=\{1,\dots,n\}$. The
\emph{Kneser graph} $K(n,r)$ has vertex set $\binom{[n]}r$, with two
$r$-subsets adjacent precisely when they are disjoint.
\end{definition}

The spectrum of $K(n,r)$ consists of the eigenvalues
\begin{equation}\label{eq:kneser-eigenvalues}
  \lambda_j=(-1)^j\binom{n-r-j}{r-j},
  \qquad 0\le j\le r,
\end{equation}
listed with multiplicities
\begin{equation}\label{eq:kneser-multiplicities}
  m_j=\binom nj-\binom n{j-1},
\end{equation}
where $\binom n{-1}=0$; see \cite[Chapter~9]{GR01}. If $n>2r$, the eigenvalues
in \eqref{eq:kneser-eigenvalues} are pairwise distinct. If $n=2r$, repeated
eigenvalues are combined by adding their listed multiplicities. The principal
eigenvalue corresponding to $j=0$ is
\[
  \lambda_0=\binom{n-r}{r}.
\]

The Smith group of the Kneser adjacency matrix is known from Wilson's diagonal
form for inclusion matrices \cite[Theorem~2]{Wil90}. However, it does not by
itself determine the reduced Smith group. Our contribution is the following
explicit determination of the latter.

For a finite abelian group $\Gamma$, let $\ex(\Gamma)$ denote its exponent. For
positive integers $c_1,\dots,c_t$, let
$\operatorname{lcm}_{1\le i\le t}c_i$ denote their least common multiple.
\begin{theorem}\label{thm:kneser-reduced-smith}
Let $r\ge1$ and $n\ge2r$. Then the induced adjacency endomorphism
$\overline A$ on $\Lamone$ is nonsingular over $\mathbb Q$, and
\[
  \Sred(K(n,r))\cong
  \bigoplus_{j=1}^{r}
  \left(\mathbb Z/\binom{n-r-j}{r-j}\mathbb Z\right)^{m_j},
  \qquad
  m_j=\binom nj-\binom n{j-1}.
\]
Consequently,
\[
  \ex\bigl(\Sred(K(n,r))\bigr)
  =\operatorname{lcm}_{1\le j\le r}|\lambda_j|.
\]
\end{theorem}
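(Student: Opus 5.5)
The plan is to rerun Wilson's argument for the diagonal form of the disjointness matrix \cite[Theorem~2]{Wil90} on the quotient lattice $\Lamone$ rather than on $\mathbb Z^{V}$. The point is that the first step of Wilson's filtration is exactly $\mathbb Z\mathbf1$.

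\emph{Nonsingularity and order.} Since $A\mathbf1=\lambda_0\mathbf1$ and $\mathbb Q^V=\mathbb Q\mathbf1\oplus\mathbf1^\perp$ with $A$-invariant summands, the eigenvalues of $\overline A\otimes\mathbb Q$ are the $\lambda_j$ with $j\ge1$, with multiplicities $m_j$. When $n=2r$, the value $\lambda_r=(-1)^r$ is a new eigenvalue and the multiplicities add up, so this list is still correct. All $\lambda_j$ with $j\le r$ are nonzero because $n-r-j\ge r-j\ge0$. Hence $\overline A$ is nonsingular over $\mathbb Q$, and
\[
  |\Sred(K(n,r))|=\prod_{j\ge1}|\lambda_j|^{m_j}.
\]
The snake lemma applied to $0\to\mathbb Z\mathbf1\to\mathbb Z^V\to\Lamone\to0$, with $A$ acting as $\lambda_0$ on the first term, gives
\[
  0\to\mathbb Z/\lambda_0\to\operatorname{coker}A\to\Sred\to0.
\]
This exact sequence alone does not pin down $\Sred$, because the extension need not split in an obvious way. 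That is why I go back to the structure underlying Wilson's theorem.

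\emph{Filtration.} Let $W_{jr}$ be the inclusion matrix of $j$-subsets versus $r$-subsets, and let $M_j\subseteq\mathbb Z^V$ be the row lattice of $W_{jr}$, so that $M_0=\mathbb Z\mathbf1\subseteq M_1\subseteq\dots\subseteq M_r=\mathbb Z^V$. I will use three facts, all contained in Wilson's proof and otherwise provable from the standard identities $W_{ij}W_{jr}=\binom{r-i}{j-i}W_{ir}$ and from expressing $A$ as an alternating combination $\sum_i(-1)^iW_{ir}^{T}W_{ir}$.
\begin{enumerate}[label=(\roman*)]
\item Each $M_j$ is a pure sublattice.
\item $M_j/M_{j-1}$ is free of rank $m_j$.
\item $A$ preserves each $M_j$ and acts on $M_j/M_{j-1}$ as multiplication by $\lambda_j$.
\end{enumerate}
Passing to $\Lamone$, the lattices $M_j/M_0$ for $j\ge1$ give a pure $\overline A$-stable filtration of $\Lamone$ with the same graded pieces for $j\ge1$. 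Choosing a basis adapted to it, $\overline A$ becomes block triangular with diagonal blocks $\lambda_jI_{m_j}$ for $1\le j\le r$.

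\emph{Main obstacle: diagonalizing the triangular form.} The hard step is showing that this block-triangular matrix has Smith group $\bigoplus_{j\ge1}(\mathbb Z/\lambda_j)^{m_j}$, that is, that the off-diagonal blocks can be removed by unimodular operations. Divisibility of successive $\lambda_j$ fails in general, so this cannot be done naively. I would first try to import Wilson's mechanism verbatim: he produces explicit integral complements using the $W_{jr}$, with respect to which $A$ is genuinely block diagonal up to unimodular change of basis, and in his setting the $j=0$ block is the span of $\mathbf1$ and splits off. Restricting his decomposition to the complement of the $j=0$ summand then gives the result directly on $\Lamone$.

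As a fallback, I would verify, prime by prime, that the $p$-rank of the kernel of $\overline A$ modulo $p^e$ matches the claimed group. The order computed above then forces equality. The exponent formula $\ex(\Sred)=\operatorname{lcm}_{1\le j\le r}|\lambda_j|$ follows immediately from the direct sum decomposition.
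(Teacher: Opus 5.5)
\textbf{There is a genuine gap.} It is exactly the step you label the main obstacle: you never show that the block-triangular form of $\overline A$ can be diagonalized. Saying you would ``import Wilson's mechanism verbatim'' is an assertion, not an argument. What is needed is a pair of $\mathbb Z$-bases of $\mathbb Z^V$, one for the domain and one for the codomain, both containing $\mathbf 1$, in which $A$ is diagonal. This compatibility with $\mathbb Z\mathbf 1$ is precisely what the theorem needs. It does not follow from Wilson's diagonal form of $A$ on $\mathbb Z^V$, as your own snake-lemma extension shows. So invoking it without exhibiting such bases assumes what is to be proved. The fallback of computing $p$-ranks of $\ker\overline A$ modulo $p^e$ is only named, with no way given to compute them.

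Your filtration is also misdefined: the row lattice of $W_{jr}$ alone is neither nested nor pure. Take $r\ge2$. Integers $c_i$ with $\sum_{i\in u}c_i=1$ for every $r$-set $u$ must all be equal (swap one element), forcing $rc=1$. Hence $M_1$ contains $r\mathbf 1$ but not $\mathbf 1$, so $M_0\not\subseteq M_1$ and $M_1$ is not pure. The correct $j$-th term is the span of the vectors $\chi_\beta=\sum_{u\supseteq\beta}e_u$ for standard $\beta$ with $|\beta|\le j$. This span is a direct summand by Bier's theorem.

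\textbf{The missing idea} (which is the paper's proof) is that in this adapted basis every entry of the column of $A$ indexed by $\beta$ is divisible by $a_{|\beta|}=\binom{n-r-|\beta|}{r-|\beta|}$, not only the diagonal one. Counting the $r$-sets that contain $\beta$ and avoid $u$ gives $A\chi_\beta=a_{|\beta|}\psi_\beta$. Here $\psi_\beta=\sum_{u\cap\beta=\emptyset}e_u=\sum_{\alpha\subseteq\beta}(-1)^{|\alpha|}\chi_\alpha$. Subsets of standard sets are standard, so the $\psi_\beta$ arise from the $\chi_\beta$ by a block-triangular matrix $H$ with diagonal blocks $\pm I$. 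Hence the $\psi_\beta$ form another $\mathbb Z$-basis.

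In other words, your triangular matrix equals $HD$ with $H$ unimodular; this is the identity $AP=QD$, $Q=PH$. Because $\chi_\emptyset=\psi_\emptyset=\mathbf 1$, the classes of the $\chi_\beta$ and of the $\psi_\beta$ with $\beta\neq\emptyset$ are bases of $\Lamone$. In these bases $\overline A\,\bar\chi_\beta=a_{|\beta|}\bar\psi_\beta$, which gives the cokernel $\bigoplus_{j\ge1}(\mathbb Z/a_j\mathbb Z)^{m_j}$ at once. The lack of divisibility among the $\lambda_j$ then never matters.
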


Note that the summand with $j=r$ is always trivial because
$\binom{n-2r}{0}=1$. To the best of our knowledge, this reduced quotient has not previously
been isolated explicitly.

The exact group in Theorem~\ref{thm:kneser-reduced-smith}, together with the
embedding criterion from \cite[Corollary~1.3(b)]{Bat26}, yields the
graph-theoretic application.

\begin{theorem}\label{thm:kneser-labeling}
Let $r\ge1$ and $n\ge2r$, and let $\Gamma$ be an abelian group of order
$\binom nr$.
\begin{enumerate}[label=\emph{(\alph*)},leftmargin=2.2em]
\item The graph $K(n,r)$ admits an affinely generating $\Gamma$-magic map if
and only if
\[
  (n,r)=(9,2)
  \qquad\text{and}\qquad
  \Gamma\cong\mathbb Z/6\mathbb Z\oplus\mathbb Z/6\mathbb Z.
\]
\item The graph $K(n,r)$ admits no $\Gamma$-distance magic labeling.
\end{enumerate}
\end{theorem}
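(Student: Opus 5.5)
The plan is to combine Theorem~\ref{thm:kneser-reduced-smith} with the embedding criterion of \cite[Corollary~1.3(b)]{Bat26}. This reduces part (a) to a purely arithmetic question: for which $(n,r)$ and which abelian $\Gamma$ with $|\Gamma|=N:=\binom nr$ does $\Gamma$ embed in $\bigoplus_{j=1}^{r}(\mathbb Z/c_j\mathbb Z)^{m_j}$, where $c_j=\binom{n-r-j}{r-j}$? I will work one prime at a time. The $p$-part of the reduced Smith group is $\bigoplus_j(\mathbb Z/p^{v_p(c_j)})^{m_j}$. Hence $\Gamma_p$ embeds if and only if, for every $k\ge1$, the number of cyclic factors of $\Gamma_p$ of order at least $p^k$ is at most $\sum_{v_p(c_j)\ge k}m_j$. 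Two consequences will be used throughout:
\begin{itemize}
\item every prime $p\mid N$ must divide some $c_j$ with $1\le j\le r-1$;
\item $\ex(\Gamma)$ must divide $\operatorname{lcm}_j c_j$.
\end{itemize}

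For $r=1$ the group is trivial while $N=n\ge2$, so no map exists. For $r=2$ the group is $(\mathbb Z/(n-3))^{n-1}$. The rank $n-1$ is ample, so the condition is exactly that $\ex(\Gamma)\mid n-3$, with every prime of $n(n-1)/2$ dividing $n-3$. Since $\gcd(n,n-3)\mid 3$ and $\gcd(n-1,n-3)\mid 2$, both $n$ and $n-1$ must be $\{2,3\}$-smooth. The classical list of consecutive $3$-smooth pairs, $(1,2),(2,3),(3,4),(8,9)$, leaves only $n\in\{4,9\}$. For $n=4$ we get $n-3=1$, which fails. For $n=9$ we get $N=36$ and $n-3=6$, so $\Gamma$ has order $36$ and exponent dividing $6$; this forces $\Gamma\cong(\mathbb Z/6)^2$, which indeed embeds in $(\mathbb Z/6)^8$.

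For $r\ge3$ I aim to exhibit a prime $p$ with $v_p(N)>0$ but $p\nmid c_j$ for all $1\le j\le r-1$, or else a prime where the $p$-adic valuation requirement fails. Kummer's theorem expresses $v_p(N)$ as the number of carries in $r+(n-r)$ base $p$, and $v_p(c_j)$ as the number of carries in $(r-j)+(n-2r)$. The natural first attempt is to take $p$ a prime divisor of $N$ with $p>n-r-1$, which automatically exceeds all prime factors of every $c_j$. When such a $p$ does not exist, I would instead use a prime dividing $n$ or $n-1$ and compare carries. In particular I would exploit that $c_1=\binom{n-r-1}{r-1}$ together with the smaller $c_j$ only "see" the digits of $n-2r$, whereas $N$ also sees the top digits of $n$.

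I expect this $r\ge3$ case to be the main obstacle. Sylvester--Schur only gives a prime factor of $N$ larger than $r$, which is not enough on its own. I anticipate needing a carry-comparison argument combined with the exponent constraint, plus a finite computer-free check of small $n$.

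For part (b), the only candidate is $K(9,2)$ with $\Gamma=(\mathbb Z/6)^2$, since a distance magic labeling is affinely generating. Suppose $f$ were a bijective magic labeling. Comparing the magic equations at the vertices $\{a,b\}$ and $\{a,c\}$ gives
\[
\sum_{x\notin\{a,b,c\}}f(\{c,x\})=\sum_{x\notin\{a,b,c\}}f(\{b,x\})
\]
for all distinct $a,b,c$. Varying $a$ and subtracting, we deduce
\[
f(\{c,a\})-f(\{b,a\})=f(\{c,a'\})-f(\{b,a'\})
\]
for all $a,a'$ outside $\{b,c\}$. This forces many coincident differences among the $36$ distinct labels, i.e.\ a large family of pairwise differences repeats. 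A weak Sidon set bound in a group of order $36$ limits how many such repetitions a set of distinct elements can carry, and the contradiction should follow from counting these repeated differences against that bound.
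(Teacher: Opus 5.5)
Your reduction to the arithmetic embedding problem, the case $r=1$, and your direct treatment of $r=2$ are correct. There $\Sred(K(n,2))\cong(\mathbb Z/(n-3)\mathbb Z)^{n-1}$, the gcd argument forces $n$ and $n-1$ to be $\{2,3\}$-smooth, and only $n=9$ survives; this is a pleasant elementary shortcut compared with the paper's uniform treatment. The genuine gap is the case $r\ge3$, which is the heart of part~(a). There you only describe what you would try, and neither idea closes it:
\begin{enumerate}
\item The prime you propose to use first is (for $n>2r$) a prime in $(n-r,n]$, and such a prime need not exist because of long runs of composites.
\item The ``carry comparison'' fallback is not an argument.
\end{enumerate}

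The missing step is a two-sided localization of the dangerous primes. If $p>r$ divides $\binom nr$, then $p\mid n-a$ for some $0\le a\le r-1$. If moreover $p\mid c_j$ for some $1\le j\le r-1$, then $p\nmid(r-j)!$, so $p$ divides one of $n-r-j,\dots,n-2r+1$. That is, $p\mid n-b$ with $r+1\le b\le 2r-1$. Hence $p\mid b-a$ and $p\le 2r-1$. Thus your necessary condition forces $P^{+}\bigl(n(n-1)\cdots(n-r+1)\bigr)<2r$, as in Lemma~\ref{lem:primediv}.

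You are right that Sylvester--Schur ($P^{+}>r$) is too weak. What is needed is the Laishram--Shorey bound $P^{+}\bigl((x+1)\cdots(x+k)\bigr)>2k$ for $x\ge\max\{k+13,\tfrac{279}{262}k\}$ (Theorem~\ref{thm:LS}), which yields $n<\max\{2r+13,\tfrac{541}{262}r\}$. The argument then splits into two ranges:
\begin{enumerate}
\item For $r\ge25$, Nagura's prime in $(x,1.2x)$ with $x=n-r$ (Theorem~\ref{thm:nagura}) gives a prime in $(n-r,n)$, a contradiction.
\item For $r\le24$, one has $n\le60$; runs of composites then force $r\le3$, and a short table leaves only $(9,2)$.
\end{enumerate}
Without this input, or an elementary substitute of comparable strength, your argument for~(a) covers only $r\le2$.

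For part~(b), your relation $f(\{c,a\})-f(\{b,a\})=f(\{c,a'\})-f(\{b,a'\})$ is correct, but the proposed finish does not work as stated. The $36$ labels exhaust $\Gamma$, so every nonzero difference occurs equally often among them, and no weak Sidon bound constrains the label set. You need to turn the relation into additive structure. Put $T_i=\sum_{u\ni i}f(u)$ and $\sigma=\sum_u f(u)$. Inclusion--exclusion gives $\mu=\sigma-T_i-T_j+f(\{i,j\})$, so $f(\{i,j\})=T_i+T_j+\mu-\sigma$. Bijectivity makes the $36$ sums $T_i+T_j$ pairwise distinct. Hence the $T_i$ are distinct (if $T_i=T_j$, then $T_i+T_h=T_j+T_h$), and $\{T_1,\dots,T_9\}$ is a weak Sidon set of size $9$ in $(\mathbb Z/6\mathbb Z)^2$. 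Theorem~\ref{thm:HO}, with $|\Gamma|=36$ and three elements of order two, caps its size at $\lfloor(\sqrt{161}+3)/2\rfloor=7$, which is the contradiction.
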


To prove part~\emph{(a)}, we combine the exact reduced exponent in
Theorem~\ref{thm:kneser-reduced-smith} with prime-divisor results of
Laishram--Shorey and Nagura \cite{LS05,Nag52}. This excludes affine generation
except when
$(n,r)=(9,2)$ and
$\Gamma\cong\mathbb Z/6\mathbb Z\oplus\mathbb Z/6\mathbb Z$. In that case
Theorem~\ref{thm:kneser-reduced-smith} gives
$
  \Sred(K(9,2))\cong(\mathbb Z/6\mathbb Z)^8,
$
so the embedding criterion also proves existence. For part~\emph{(b)}, a
weak-Sidon-set bound of Haanp\"a\"a and \"Osterg\r{a}rd \cite{HO07} shows
that such a map cannot be bijective.

Section~\ref{sec:knesersmith} proves
Theorem~\ref{thm:kneser-reduced-smith}. Section~\ref{sec:tools} collects the
embedding criterion from \cite{Bat26} and the arithmetic and combinatorial
bounds used later. Section~\ref{sec:mainproof}  proves
Theorem~\ref{thm:kneser-labeling}.

\section{The reduced Smith group of a Kneser graph}\label{sec:knesersmith}

Fix integers $r\ge1$ and $n\ge2r$, and write
\[
  V=\binom{[n]}r.
\]
Thus $V$ is the vertex set of $K(n,r)$. Let
$\{e_u\mid u\in V\}$ be the standard basis of the free abelian group
$\mathbb Z^V$. We regard the adjacency matrix $A$ as the matrix of an
endomorphism of $\mathbb Z^V$. For a statement $E$, let $[E]$ denote $1$ when
$E$ is true and $0$ when $E$ is false. Then
\[
  A_{u,v}=[u\cap v=\emptyset],
  \qquad u,v\in V.
\]
We first introduce the standard subsets that will index the coordinates used
in the integral diagonalization of $A$.

For $0\le j\le r$, let $\mathcal B_j$ consist of the sets
\[
  \beta=\{b_1<\cdots<b_j\}\subseteq[n]
\]
such that $b_i\ge2i$ for every $1\le i\le j$. These sets are called
\emph{standard}. The empty set is the unique member of
$\mathcal B_0$. Put
\[
  \mathcal B=\bigcup_{j=0}^r\mathcal B_j.
\]

By \cite{DEGJPP25} (see also \cite[Claim~2.1]{Fra90}),
\begin{equation}\label{eq:level-multiplicities}
  |\mathcal B_j|
  =\binom nj-\binom n{j-1}
  =m_j,
  \qquad 0\le j\le r,
\end{equation}
where the last equality is
\eqref{eq:kneser-multiplicities}. For
$1\le j\le r$, the inequalities
$2j\le2r\le n$ imply
\[
  \frac{\binom nj}{\binom n{j-1}}
  =\frac{n-j+1}{j}>1,
\]
so $m_j>0$. Summing \eqref{eq:level-multiplicities} over $j$ gives
\[
  |\mathcal B|
  =\sum_{j=0}^r m_j
  =\binom nr
  =|V|.
\]

We now fix the matrix-indexing conventions. Choose any total order on $V$ and
use it for both the rows and columns of $A$. For every $j$, choose a total
order on the finite set $\mathcal B_j$. Order $\mathcal B$ by listing the
levels in increasing cardinality,
$\mathcal B_0,\mathcal B_1,\dots,\mathcal B_r$, and use the chosen order
inside each level. Every matrix whose rows and columns are both indexed by
$\mathcal B$ will use this level order on both sides.

Let $\mathbb Z^{\mathcal B}$ be the free abelian group with standard basis
$\{e_\beta\mid\beta\in\mathcal B\}$. Throughout this section, an integral
homomorphism and its matrix with respect to the specified bases are denoted by
the same symbol. Define homomorphisms
\[
  P,Q\colon
  \mathbb Z^{\mathcal B}\longrightarrow\mathbb Z^V
\]
by specifying their values on the standard basis as follows.
\[
  P(e_\beta)
  =\sum_{\substack{u\in V\\\beta\subseteq u}}e_u,
  \qquad
  Q(e_\beta)
  =\sum_{\substack{u\in V\\u\cap\beta=\emptyset}}e_u.
\]
Equivalently, their matrix entries are
\[
  P_{u,\beta}=[\beta\subseteq u],
  \qquad
  Q_{u,\beta}=[u\cap\beta=\emptyset],
\]
where $u\in V$ and $\beta\in\mathcal B$. The fixed order on $V$ indexes the
rows of $P$ and $Q$, and the level order on $\mathcal B$ indexes their columns.
Since $|\mathcal B|=|V|$, both matrices are square. The matrix $P$ is the Bier
matrix and is unimodular, i.e., $\det(P)=\pm1$ \cite{Bie93}.

Both maps identify the empty-set coordinate with the
all-ones vector. Indeed,
every $u\in V$ contains $\emptyset$ and is disjoint from $\emptyset$, so
\begin{equation}\label{eq:emptyset-column}
  P(e_{\emptyset})
  =Q(e_{\emptyset})
  =\sum_{u\in V}e_u
  =\mathbf1.
\end{equation}

For $0\le j\le r$, define
\[
  a_j=\binom{n-r-j}{r-j}.
\]
Since $n\ge2r$, we have $n-r-j\ge r-j$, and hence $a_j$ is a positive integer.
With this notation, \eqref{eq:kneser-eigenvalues} reads
\[
  \lambda_j=(-1)^j a_j.
\]
Define the diagonal endomorphism $D$ of $\mathbb Z^{\mathcal B}$ by
\[
  D(e_\beta)=a_{|\beta|}e_\beta.
\]
Thus its matrix entries are
\[
  D_{\alpha,\beta}=a_{|\beta|}[\alpha=\beta],
  \qquad \alpha,\beta\in\mathcal B.
\]

The following identity can be deduced from \cite[Lemma~1.5]{DEGJPP25} by taking
both the row and column indexing families to consist of the $r$-subsets of
$[n]$ and the prescribed intersection size to be $0$. We include its short
direct proof.

\begin{lemma}[\cite{DEGJPP25}]\label{lem:bier-factor}
The matrices $A$, $P$, $Q$, and $D$ satisfy
$AP=QD.$
\end{lemma}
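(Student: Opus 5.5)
We compare the two sides column by column. Fix $\beta\in\mathcal B$ with $|\beta|=j$ and a row $u\in V$. By definition of $D$, the $(u,\beta)$ entry of $QD$ is $a_j[u\cap\beta=\emptyset]$. The $(u,\beta)$ entry of $AP$ is
\[
  (AP)_{u,\beta}=\sum_{v\in V}[u\cap v=\emptyset]\,[\beta\subseteq v]
  =\#\{v\in V\mid \beta\subseteq v,\ v\cap u=\emptyset\}.
\]
So it suffices to count the $r$-sets $v$ that contain $\beta$ and avoid $u$.

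There are two cases. If $u\cap\beta\neq\emptyset$, then no $v\supseteq\beta$ can be disjoint from $u$, so the count is $0$, which agrees with $Q_{u,\beta}=0$.

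If $u\cap\beta=\emptyset$, then such a $v$ is determined by its part $v\setminus\beta$. This part is an $(r-j)$-subset of $[n]\setminus(u\cup\beta)$, a set of size $n-r-j$ because $u$ and $\beta$ are disjoint. The count is therefore $\binom{n-r-j}{r-j}=a_j$, which agrees with $a_jQ_{u,\beta}=a_j$. Here $n-r-j\ge r-j\ge0$, so the binomial coefficient is a genuine count; in particular the case $j=r$ gives $1$, realized by $v=\beta$.

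Since the entries agree for every $u$ and $\beta$, we get $AP=QD$. There is no real obstacle. The only point needing care is the disjointness of $u$ and $\beta$ in the second case, which is exactly what gives the ground-set size $n-r-j$. The standardness condition on $\beta$ plays no role here; it matters only for unimodularity of $P$ and for the later analysis of $Q$.
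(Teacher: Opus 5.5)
Your proof is correct and follows the paper's argument essentially step for step. You compare entries, use $(QD)_{u,\beta}=a_j[u\cap\beta=\emptyset]$, and count the $r$-sets $v\supseteq\beta$ disjoint from $u$: the count is zero when $u\cap\beta\ne\emptyset$, and it is $\binom{n-r-j}{r-j}$ in the disjoint case, obtained by choosing $v\setminus\beta$ inside the $(n-r-j)$-element set $[n]\setminus(u\cup\beta)$.
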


\begin{proof}
Fix $u\in V$ and $\beta\in\mathcal B_j$, where $0\le j\le r$. From the
definitions of $A$ and $P$,
\[
  (AP)_{u,\beta}
  =
  \sum_{v\in V}
  [u\cap v=\emptyset][\beta\subseteq v].
\]
This sum counts the $r$-subsets $v$ that contain $\beta$ and are disjoint
from $u$. If $u\cap\beta\ne\emptyset$, no such set exists. If
$u\cap\beta=\emptyset$, then every such $v$ is obtained by adjoining to
$\beta$ an $(r-j)$-subset of $[n]\setminus(u\cup\beta)$. This latter set has
$n-r-j$ elements. Consequently,
\[
  (AP)_{u,\beta}
  =
  [u\cap\beta=\emptyset]\binom{n-r-j}{r-j}
  =
  Q_{u,\beta}a_j.
\]
On the other hand, the matrix entries of $Q$ and $D$ give
\[
  (QD)_{u,\beta}
  =
  \sum_{\alpha\in\mathcal B}Q_{u,\alpha}a_j[\alpha=\beta]
  =
  Q_{u,\beta}a_j.
\]
Thus $(AP)_{u,\beta}=(QD)_{u,\beta}$ for every $u\in V$ and
$\beta\in\mathcal B$, which proves $AP=QD$.
\end{proof}

\begin{lemma}\label{lem:bier-mobius}
The matrix $Q$ is unimodular.
\end{lemma}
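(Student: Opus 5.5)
Since $P$ is already known to be unimodular \cite{Bie93}, the plan is to write $Q=PM$ for an integer matrix $M$ indexed by $\mathcal B$ that is unitriangular with respect to the level order. Then $\det Q=\det P\cdot\det M=\pm1$.

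\emph{Step 1: expand $Q$ in terms of inclusion columns.} By inclusion--exclusion over the subsets of $\beta$, for every $u\in V$
\[
  [u\cap\beta=\emptyset]=\sum_{\gamma\subseteq\beta}(-1)^{|\gamma|}[\gamma\subseteq u].
\]
Hence
\[
  Q(e_\beta)=\sum_{\gamma\subseteq\beta}(-1)^{|\gamma|}\,\widetilde P(e_\gamma),
\]
where $\widetilde P$ is the inclusion map defined on all subsets of $[n]$ of size at most $r$. This gives $Q=\widetilde P\,N$ with $N$ integral and indexed by subsets versus $\mathcal B$.

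\emph{Step 2: return to the standard basis.} The subsets $\gamma\subseteq\beta$ need not be standard. Since $P$ is unimodular, every column $\widetilde P(e_\gamma)$ with $|\gamma|\le r$ is an integral combination of the columns of $P$. Consequently $Q=PM$ with $M=P^{-1}Q$ integral, and it only remains to show $\det M=\pm1$.

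For this I would exploit the grading by level. Let $W_k\subseteq\mathbb Z^V$ be the span of the vectors $\widetilde P(e_\gamma)$ with $|\gamma|\le k$. Because $[\gamma\subseteq u]=\binom{r-|\gamma|}{k-|\gamma|}^{-1}\sum_{\gamma\subseteq\delta,\,|\delta|=k}[\delta\subseteq u]$, this space is rationally spanned by the columns indexed by $k$-sets. By the Bier/Frankl rank result, $W_k$ has a basis given by the columns $P(e_\beta)$ with $|\beta|\le k$, $\beta\in\mathcal B$, at least over $\mathbb Q$. Integrally this should also hold, since $P$ is unimodular and its level-filtered column sets are saturated. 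I would try to cite or prove this saturation from Wilson's diagonal form \cite{Wil90} or from \cite{DEGJPP25}.

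With the filtration in hand, the expansion in Step 1 shows that
\[
  Q(e_\beta)\equiv(-1)^{|\beta|}P(e_\beta)\pmod{W_{|\beta|-1}}.
\]
Indeed, the only term with $|\gamma|=|\beta|$ is $\gamma=\beta$. Therefore $M$ is block upper triangular with respect to the levels, with diagonal blocks $(-1)^j I_{m_j}$. This gives $\det M=\pm1$ and hence $\det Q=\pm1$.

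\emph{Main obstacle.} The delicate point is the integral filtration claim: that $\{P(e_\beta):\beta\in\mathcal B,\ |\beta|\le k\}$ is a $\mathbb Z$-basis of the lattice $W_k\cap\mathbb Z^V$, or at least spans $W_k$ over $\mathbb Z$. Here $W_k$ is the integer span of all inclusion columns of size at most $k$.

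My first attempt would be Frankl's straightening argument \cite[Claim~2.1]{Fra90}. For a nonstandard $\gamma$, take the first index $i$ with $b_i<2i$. Then $\gamma$ together with a suitable set of $2i-1$ elements gives a signed identity (a Garnir-type relation) that expresses $\widetilde P(e_\gamma)$ as an integer combination of columns of lower or lexicographically larger sets of the same size. Induction on this order then gives the integral spanning.

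Alternatively, one can avoid the filtration entirely. If the spanning holds only rationally, the triangular shape still shows that the map $Q$ is rationally invertible with the leading blocks $\pm I$. Combining this with the symmetric expansion
\[
  [\beta\subseteq u]=\sum_{\gamma\subseteq\beta}(-1)^{|\gamma|}[u\cap\gamma=\emptyset]
\]
gives $P=\widetilde Q N'$. Running the same argument with the roles of $P$ and $Q$ exchanged would then show that $M^{-1}$ is integral too, which again yields $\det M=\pm1$.
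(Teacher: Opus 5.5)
Your Step~2 starts from a false premise: subsets of standard sets \emph{are} standard. If $\beta=\{b_1<\cdots<b_j\}$ has $b_i\ge 2i$ for all $i$ and $\gamma=\{b_{i_1}<\cdots<b_{i_t}\}\subseteq\beta$, then $b_{i_s}\ge 2i_s\ge 2s$, since $i_s\ge s$. So every $\gamma$ in your inclusion--exclusion expansion already lies in $\mathcal B$. Step~1 alone then gives $Q=PH$ with $H_{\alpha,\beta}=(-1)^{|\alpha|}[\alpha\subseteq\beta]$ for $\alpha,\beta\in\mathcal B$. This $H$ is block upper triangular in the level order with diagonal blocks $(-1)^jI_{m_j}$, so $\det Q=\pm\det P=\pm1$. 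That is exactly the paper's proof. Closure of $\mathcal B$ under subsets is the observation you were missing; with it, the filtration, the saturation question, Frankl-type straightening and Wilson's diagonal form are all unnecessary.

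As written, your proposal is also incomplete on its own terms. You leave the integral filtration open as the ``main obstacle''. Your fallback is circular: exchanging the roles of $P$ and $Q$ in Step~2 means expressing each $\widetilde Q(e_\gamma)$ integrally through the columns of $Q$, which presupposes that $Q$ is unimodular.

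The obstacle is illusory even inside your detour. The matrix $M=P^{-1}Q$ is integral because $P$ is unimodular. Block triangularity is a statement about the unique coordinates of $Q(e_\beta)$ in the basis $\{P(e_\alpha)\}_{\alpha\in\mathcal B}$. So you only need the rational statement that $W_k$ lies in the $\mathbb Q$-span of $\{P(e_\alpha)\mid\alpha\in\mathcal B,\ |\alpha|\le k\}$. That follows from a dimension count:
\begin{itemize}
\item your averaging identity puts $W_k$ inside the $\mathbb Q$-span of the $\binom nk$ columns indexed by $k$-sets;
\item $W_k$ contains the $\sum_{j\le k}m_j=\binom nk$ standard columns of level at most $k$, which are linearly independent because they are part of a basis.
\end{itemize}
Unimodularity of $P$ then gives the integral version at once. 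Repaired this way, your argument is valid and never uses closure under subsets, only that $P$ is unimodular and the level counts $m_j$. It is, however, considerably longer than the direct factorization $Q=PH$.
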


\begin{proof}
Define an endomorphism $H$ of $\mathbb Z^{\mathcal B}$ by
\[
  H_{\alpha,\beta}
  =(-1)^{|\alpha|}[\alpha\subseteq\beta],
  \qquad \alpha,\beta\in\mathcal B.
\]
We first show that $Q=PH$.
Fix $u\in V$ and $\beta\in\mathcal B$. Expanding the corresponding matrix
entry gives
\begin{equation}\label{eq:ph-entry}
  (PH)_{u,\beta}
  =
  \sum_{\alpha\in\mathcal B}
  [\alpha\subseteq u](-1)^{|\alpha|}[\alpha\subseteq\beta]
  =
  \sum_{\substack{\alpha\in\mathcal B\\
                   \alpha\subseteq u\cap\beta}}
  (-1)^{|\alpha|}
  =
  \sum_{\alpha\subseteq u\cap\beta}
  (-1)^{|\alpha|}.
\end{equation}
The last equality follows from the fact that $\mathcal B$ is closed under
taking subsets. Indeed, write $\beta=\{b_1<\cdots<b_j\}$. If
$\alpha=\{b_{i_1}<\cdots<b_{i_t}\}\subseteq\beta$, then
$b_{i_s}\ge2i_s\ge2s$ for every $1\le s\le t$. Thus $\alpha$ is
standard and hence belongs to $\mathcal B$. The last sum in
\eqref{eq:ph-entry} equals $1$ if $u\cap\beta=\emptyset$. If
$u\cap\beta\ne\emptyset$, then $u\cap\beta$ has equally many subsets of even
and odd cardinality. Indeed, toggling the membership of any fixed element of
$u\cap\beta$ gives a bijection between these two classes of subsets. Hence
the last term of \eqref{eq:ph-entry} is $
  [u\cap\beta=\emptyset]
  =Q_{u,\beta}.
$
Therefore $Q=PH$.

We next show that $H$ is unimodular. If
$H_{\alpha,\beta}\ne0$, then $\alpha\subseteq\beta$ and
$|\alpha|\le|\beta|$. Hence $H$ is block upper triangular with respect to the
level order on $\mathcal B$. If $\alpha$ and $\beta$ have the same
cardinality and $\alpha\subseteq\beta$, then $\alpha=\beta$. The diagonal
block indexed by $\mathcal B_j$ is therefore
\[
  (-1)^jI_{m_j},
\]
where $I_{m_j}$ denotes the $m_j\times m_j$ identity matrix.
It follows that
\[
  \det H=\prod_{j=0}^r(-1)^{j m_j}\in\{1,-1\},
\]
and hence $H$ is unimodular. The identity
$Q=PH$ now shows that $Q$ is unimodular
because both $P$ and $H$ are unimodular.
\end{proof}

Consequently, the column families
\[
  \{P(e_\beta)\mid\beta\in\mathcal B\}
  \quad\text{and}\quad
  \{Q(e_\beta)\mid\beta\in\mathcal B\}
\]
are integral bases of $\mathbb Z^V$. The identity $AP=QD$ is therefore
equivalent to $Q^{-1}AP=D$. Thus the matrix of $A$ with respect to the
$P$-basis in the domain and the $Q$-basis in the codomain is the diagonal
matrix $D$.

\begin{proof}[\textnormal{\textbf{Proof of Theorem~\ref{thm:kneser-reduced-smith}.}}]
The graph $K(n,r)$ is $a_0$-regular. Indeed, if $u\in V$, then every neighbor
of $u$ is an $r$-subset of the $(n-r)$-element set $[n]\setminus u$, and every
such subset is a neighbor. Thus the degree of $u$ is
\[
  \binom{n-r}{r}=a_0.
\]
It follows that $A\mathbf1=a_0\mathbf1$. Hence the constant line
$\mathbb Z\mathbf1$ is $A$-invariant, and $A$ induces an endomorphism
$\overline A$ on
\[
  \Lamone=\mathbb Z^V/\mathbb Z\mathbf1.
\]
Explicitly,
\[
  \overline A(x+\mathbb Z\mathbf1)
  =Ax+\mathbb Z\mathbf1,
  \qquad x\in\mathbb Z^V.
\]

Since $P$ is unimodular by \cite{Bie93} and $Q$ is unimodular by
Lemma~\ref{lem:bier-mobius}, both homomorphisms are isomorphisms.
Equation~\eqref{eq:emptyset-column} gives
\[
  P(\mathbb Ze_{\emptyset})
  =Q(\mathbb Ze_{\emptyset})
  =\mathbb Z\mathbf1.
\]
Therefore, $P$ and $Q$ induce isomorphisms
\[
  \overline P,\overline Q\colon
  \mathbb Z^{\mathcal B}/\mathbb Ze_{\emptyset}
  \longrightarrow
  \Lamone.
\]
Explicitly,
\[
  \overline P(x+\mathbb Ze_{\emptyset})
  =Px+\mathbb Z\mathbf1,
  \qquad
  \overline Q(x+\mathbb Ze_{\emptyset})
  =Qx+\mathbb Z\mathbf1.
\]

The equality
\[
  D(e_{\emptyset})=a_0e_{\emptyset}
\]
shows that $D$ preserves $\mathbb Ze_{\emptyset}$. It therefore
induces an endomorphism $\overline D$ of
$\mathbb Z^{\mathcal B}/\mathbb Ze_{\emptyset}$ given by
\[
  \overline D(x+\mathbb Ze_{\emptyset})
  =Dx+\mathbb Ze_{\emptyset}.
\]
The identity $AP=QD$ from Lemma~\ref{lem:bier-factor} descends to the quotient
lattices and gives
\[
  \overline A\,\overline P
  =\overline Q\,\overline D.
\]
Since $\overline P$ is an isomorphism,
\begin{equation}\label{eq:reduced-factorization}
  \overline A
  =\overline Q\,\overline D\,\overline P^{-1},
\end{equation}
which shows that $\overline A$ and $\overline D$ are integrally
equivalent.

Every class in $\mathbb Z^{\mathcal B}/\mathbb Ze_{\emptyset}$ has a unique
representative whose $e_{\emptyset}$-coordinate is zero. Hence
\[
  \mathbb Z^{\mathcal B}/\mathbb Ze_{\emptyset}
  \cong
  \bigoplus_{\beta\in\mathcal B\setminus\{\emptyset\}}
  \mathbb Ze_\beta
  =
  \bigoplus_{j=1}^r\mathbb Z^{\mathcal B_j}.
\]
With respect to this decomposition,
\begin{equation}\label{eq:reduced-diagonal}
  \overline D=\bigoplus_{j=1}^r a_jI_{m_j}.
\end{equation}
Since every $a_j$ is positive,
$\overline D$ is nonsingular over $\mathbb Q$. Integral equivalence shows
that $\overline A$ is also nonsingular over $\mathbb Q$, and hence
$\Sred(K(n,r))$ is defined.

By \eqref{eq:reduced-factorization} and the surjectivity of
$\overline P^{-1}$,
\[
  \operatorname{Im}\overline A
  =\overline Q\bigl(\operatorname{Im}\overline D\bigr).
\]
Therefore $\overline Q$ induces an isomorphism
\[
  \operatorname{coker}\overline D
  \longrightarrow
  \operatorname{coker}\overline A,
  \qquad
  x+\operatorname{Im}\overline D
  \longmapsto
  \overline Q(x)+\operatorname{Im}\overline A.
\]
By \eqref{eq:reduced-diagonal}, the restriction of $\overline D$ to
$\mathbb Z^{\mathcal B_j}$ is multiplication by $a_j$. Consequently,
\[
  \Sred(K(n,r))
  =\operatorname{coker}\overline A
  \cong
  \operatorname{coker}\overline D
  \cong
  \bigoplus_{j=1}^r
  \left(\mathbb Z/a_j\mathbb Z\right)^{m_j}.
\]
Substituting $a_j=\binom{n-r-j}{r-j}$ gives the decomposition in the theorem.

The inequality following \eqref{eq:level-multiplicities} shows that $m_j>0$ for
every $1\le j\le r$. Thus every value $a_j$ in this range occurs as a
cyclic-summand order. The exponent of a finite direct sum of cyclic groups is
the least common multiple of the orders of its cyclic summands.
Since $|\lambda_j|=a_j$, we conclude that
\[
  \ex\bigl(\Sred(K(n,r))\bigr)
  =\operatorname{lcm}_{1\le j\le r}a_j
  =\operatorname{lcm}_{1\le j\le r}|\lambda_j|.
\]
\end{proof}

\section{Auxiliary criteria and bounds}\label{sec:tools}

All groups considered are finite, abelian, and written additively.

\begin{lemma}\label{lem:radical}
A prime $p$ divides $|\Gamma|$ if and only if $p$ divides $\ex(\Gamma)$.
\end{lemma}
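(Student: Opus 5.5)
The plan is to prove the two implications separately, using only Lagrange's theorem and Cauchy's theorem for finite abelian groups; nothing from Section~\ref{sec:knesersmith} is needed. Throughout, $\Gamma$ is a finite abelian group, and we use the standard fact that $\ex(\Gamma)$ divides $|\Gamma|$. One way to see this fact is to note that every element order divides $|\Gamma|$ by Lagrange, so the least common multiple of the element orders, which is $\ex(\Gamma)$, also divides $|\Gamma|$. For an abelian group one can also argue directly: if $g\in\Gamma$ has order $k$, then $|\Gamma|\,g=0$, so $k$ divides $|\Gamma|$.

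For the direction ``$p\mid\ex(\Gamma)\Rightarrow p\mid|\Gamma|$'', suppose the prime $p$ divides $\ex(\Gamma)$. Since $\ex(\Gamma)$ divides $|\Gamma|$, transitivity of divisibility gives $p\mid|\Gamma|$.

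For the converse, suppose the prime $p$ divides $|\Gamma|$. Cauchy's theorem then gives an element $g\in\Gamma$ of order exactly $p$. By definition of the exponent, $\ex(\Gamma)\,g=0$, so the order $p$ of $g$ divides $\ex(\Gamma)$.

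If one prefers to avoid citing Cauchy's theorem, there is an alternative for abelian groups. Write $\Gamma\cong\bigoplus_i\mathbb Z/d_i\mathbb Z$. Then $|\Gamma|=\prod_i d_i$, so $p$ divides some $d_i$. The element $(d_i/p)\cdot 1$ in that summand has order $p$, and we conclude as above.

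I expect no real obstacle here. The only point needing care is to choose the justification for the existence of an element of order $p$, and I will use Cauchy's theorem, or equivalently the cyclic decomposition.
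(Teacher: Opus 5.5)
Your proposal is correct and follows the paper's argument: Lagrange gives $\ex(\Gamma)\mid|\Gamma|$, and Cauchy's theorem gives an element of order $p$, whose order must divide $\ex(\Gamma)$. Your alternative via the cyclic decomposition is also valid.
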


\begin{proof}
Every element order divides $|\Gamma|$, so $\ex(\Gamma)\mid|\Gamma|$.
Conversely, if $p\mid|\Gamma|$, Cauchy's theorem gives an element of order
$p$.
\end{proof}

\subsection{The embedding criterion}

The following result is Corollary~1.3(b)--(c) of \cite{Bat26}.

\begin{theorem}[\cite{Bat26}]
\label{thm:reduced-criterion}
Let $G$ be a regular graph of positive degree, let $\Gamma$ be a finite abelian
group, and assume that the induced adjacency endomorphism $\overline A$ on
$\Lamone$ is nonsingular over $\mathbb Q$. Then $G$ admits an affinely
generating $\Gamma$-magic map if and only if
\[
  \Gamma\hookrightarrow\Sred(G).
\]
Whenever these equivalent conditions hold,
\[
  \ex(\Gamma)\mid\ex(\Sred(G)).
\]
Moreover, if $|\Gamma|=|V(G)|$ and $G$ admits a
$\Gamma$-distance magic labeling, then that labeling is affinely generating,
and hence $\Gamma\hookrightarrow\Sred(G)$.
\end{theorem}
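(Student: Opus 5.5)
The plan is to translate the magic condition into a statement about homomorphisms out of a lattice, identify that lattice's cokernel with $\Sred(G)$ by duality, and then use the standard fact that quotients and subgroups of a finite abelian group correspond.

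\textbf{Step 1: encode a map as a homomorphism.} Let $A$ be the (symmetric) adjacency matrix of the $k$-regular graph $G$. Identify a map $f\colon V(G)\to\Gamma$ with the homomorphism $\varphi_f\colon\mathbb Z^{V(G)}\to\Gamma$ given by $e_v\mapsto f(v)$. Since $A$ is symmetric,
\[
  \sum_{u\in N(v)}f(u)=\varphi_f(Ae_v).
\]
Let $L_0=\{x\in\mathbb Z^{V(G)}\mid\sum_v x_v=0\}$, the lattice spanned by the differences $e_v-e_w$.

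\textbf{Step 2: rephrase the two conditions on $f$.}
\begin{itemize}
\item $f$ is magic if and only if $\varphi_f(A(e_v-e_w))=0$ for all $v,w$, that is, $\varphi_f$ vanishes on $AL_0$. Regularity gives $AL_0\subseteq L_0$.
\item $f$ is affinely generating if and only if $\varphi_f(L_0)=\Gamma$.
\end{itemize}
Since $L_0$ is a direct summand of $\mathbb Z^{V(G)}$ (with complement $\mathbb Ze_{v_0}$), every homomorphism $L_0\to\Gamma$ extends to $\mathbb Z^{V(G)}$. Hence $G$ admits an affinely generating $\Gamma$-magic map if and only if there is a surjection
\[
  \operatorname{coker}\bigl(A|_{L_0}\colon L_0\to L_0\bigr)\twoheadrightarrow\Gamma .
\]

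\textbf{Step 3: identify this cokernel with $\Sred(G)$.} This is the main point. The pairing $\langle x,y\rangle=\sum_v x_vy_v$ induces a perfect pairing $L_0\times\Lamone\to\mathbb Z$. It is perfect because $L_0$ is exactly the annihilator of $\mathbb Z\mathbf1$ and both lattices are saturated. Symmetry of $A$ makes $A|_{L_0}$ and $\overline A$ adjoint under this pairing. So in dual bases their matrices are transposes of one another. The nonsingularity hypothesis on $\overline A$ then makes $A|_{L_0}$ nonsingular as well. Since a square integer matrix and its transpose have the same Smith normal form, we get
\[
  \operatorname{coker}(A|_{L_0})\cong\operatorname{coker}\overline A=\Sred(G),
\]
and this group is finite.

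\textbf{Step 4: conclude.}
\begin{itemize}
\item For finite abelian groups $S$ and $\Gamma$, a surjection $S\twoheadrightarrow\Gamma$ exists if and only if $\Gamma\hookrightarrow S$. To see this, dualize with $\operatorname{Hom}(-,\mathbb Q/\mathbb Z)$ and use $\widehat\Gamma\cong\Gamma$, $\widehat S\cong S$. This proves the equivalence.
\item If $\Gamma$ embeds in $\Sred(G)$, then every element order in $\Gamma$ divides $\ex(\Sred(G))$, so $\ex(\Gamma)\mid\ex(\Sred(G))$.
\item For the last claim, let $f$ be a bijective magic map with $|\Gamma|=|V(G)|$. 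Some vertex $v_0$ has $f(v_0)=0$, and the differences $f(v)-f(v_0)$ then run through all of $\Gamma$. So $f$ is affinely generating, and the equivalence gives $\Gamma\hookrightarrow\Sred(G)$.
\end{itemize}

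I expect the only delicate part to be Step 3. One must check carefully that the pairing between $L_0$ and $\Lamone$ is perfect over $\mathbb Z$, not merely over $\mathbb Q$, so that the cokernels agree integrally rather than only up to finite index.
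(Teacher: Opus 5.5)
Your proof is correct. The paper gives no proof of this statement: it quotes it from Corollary~1.3(b)--(c) of \cite{Bat26}. The paper spells out only two small pieces. In the introduction it notes that a bijective magic map is affinely generating, since some vertex is labelled $0$ and the differences from it exhaust $\Gamma$. In the proof of Corollary~\ref{cor:kneser-affine} it notes that an embedding forces $\ex(\Gamma)\mid\ex(\Sred(G))$. Both coincide with your Step~4.

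What you add is a self-contained derivation of the main equivalence, in two parts. First, "magic and affinely generating" is exactly a surjection $L_0/AL_0\twoheadrightarrow\Gamma$, using that $L_0$ is a direct summand. Second, $L_0/AL_0\cong\Lamone/\operatorname{Im}\overline A$, because the pairing $L_0\times\Lamone\to\mathbb Z$ is perfect and symmetry of $A$ makes $A|_{L_0}$ and $\overline A$ adjoint. This also explains why $\Sred(G)$ can be defined on the quotient lattice $\Lamone$, even though the magic condition naturally lives on the sum-zero sublattice $L_0$.

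The integrality you flagged as delicate is immediate. Fix $v_0$. The bases $\{e_v-e_{v_0}\}_{v\ne v_0}$ of $L_0$ and $\{e_v+\mathbb Z\mathbf1\}_{v\ne v_0}$ of $\Lamone$ pair to the identity Gram matrix. So the two matrices are literally transposes of each other and have the same Smith normal form.

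One small observation: your argument never uses the positive-degree hypothesis. This is harmless. In degree $0$ the map $\overline A$ is zero, so it is nonsingular only when $|V(G)|=1$. In that case both sides of the equivalence reduce to $\Gamma=0$.
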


\begin{corollary}\label{cor:prime-obstruction}
Under the hypotheses of Theorem~\ref{thm:reduced-criterion}, assume in addition
that $|\Gamma|=|V(G)|$. If $G$ admits an affinely generating $\Gamma$-magic
map, then every prime divisor of $|V(G)|$ divides $\ex(\Sred(G))$.
\end{corollary}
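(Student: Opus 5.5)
The plan is to combine the exponent divisibility in Theorem~\ref{thm:reduced-criterion} with Lemma~\ref{lem:radical}.

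First, assume $G$ admits an affinely generating $\Gamma$-magic map, with $|\Gamma|=|V(G)|$. By Theorem~\ref{thm:reduced-criterion}, $\Gamma\hookrightarrow\Sred(G)$. From this embedding we get $\ex(\Gamma)\mid\ex(\Sred(G))$. This divisibility can be quoted directly from the theorem, or seen from the fact that an injective homomorphism preserves element orders.

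Next, let $p$ be a prime dividing $|V(G)|=|\Gamma|$. By Lemma~\ref{lem:radical}, $p\mid\ex(\Gamma)$. Transitivity of divisibility then gives $p\mid\ex(\Sred(G))$, which is the claim.

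None of these steps is a real obstacle. The only point to check is that $\Sred(G)$ is a finite group, so that its exponent is defined. This holds because $\overline A$ is nonsingular over $\mathbb Q$, so its cokernel on the rank-$(|V|-1)$ lattice $\Lamone$ is finite.
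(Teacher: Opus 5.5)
Your proof is correct and matches the paper's: you take $p\mid\ex(\Gamma)$ from Lemma~\ref{lem:radical} and $\ex(\Gamma)\mid\ex(\Sred(G))$ from Theorem~\ref{thm:reduced-criterion}, then use transitivity. Your added remark that $\Sred(G)$ is finite because $\overline A$ is nonsingular on the rank-$(|V(G)|-1)$ lattice $\Lamone$ is a valid check that the paper leaves implicit.
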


\begin{proof}
If $p\mid|V(G)|=|\Gamma|$, then $p\mid\ex(\Gamma)$ by
Lemma~\ref{lem:radical}. Apply Theorem~\ref{thm:reduced-criterion}.
\end{proof}

\subsection{Prime-divisor bounds}
For an integer $m>1$, let $P^{+}(m)$ denote its greatest prime factor. We use
the following two explicit results.

\begin{theorem}[\cite{Nag52}]\label{thm:nagura}
For every real number $x\ge25$, there exists a prime $p$ satisfying
$x<p<1.2x$.
\end{theorem}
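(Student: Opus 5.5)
Since this statement is quoted from \cite{Nag52} and used only as a black box, I would not reprove it from scratch. A self-contained argument along Chebyshev lines proceeds as follows. Write $\theta(x)=\sum_{p\le x}\log p$. It suffices to show that
\[
  \theta(1.2x)-\theta(x)>0 ,
\]
because any prime counted in this difference lies in $(x,1.2x]$, and $1.2x$ is prime only if it is an integer prime, which can be avoided by a slight shrinking. The plan is to split the range $x\ge25$ into a large-$x$ part, handled by explicit analytic bounds for $\theta$, and a finite initial segment, handled by direct inspection of a prime table.

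For the analytic part, I would invoke explicit Chebyshev-type estimates of Rosser--Schoenfeld type. These are
\[
  \theta(y)>y\Bigl(1-\tfrac1{\log y}\Bigr)\quad(y\ge41),
  \qquad
  \theta(y)<y\Bigl(1+\tfrac1{2\log y}\Bigr)\quad(y>1).
\]
Applying the lower bound at $y=1.2x$, and using $\log(1.2x)>\log x$, gives
\[
  \theta(1.2x)-\theta(x)>x\Bigl(0.2-\frac{1.2}{\log x}-\frac1{2\log x}\Bigr)
  =x\Bigl(0.2-\frac{1.7}{\log x}\Bigr).
\]
This is positive once $\log x>8.5$, that is, for $x\ge X_0:=5000$ (roughly). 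The main obstacle is precisely having these explicit estimates available. If one wants to avoid citing them, I would try first the classical route: use explicit upper and lower bounds for $\psi(x)$ obtained from $\log\bigl(\tfrac{(30n)!\,n!}{(15n)!\,(10n)!\,(6n)!}\bigr)$ (Chebyshev's weights), then pass from $\psi$ to $\theta$ via $\psi(x)-\theta(x)\le 2\sqrt x\log x$, say. This yields a constant of the required quality, $\theta(x)=x(1+O(\epsilon))$ with $\epsilon<1/11$, beyond some larger but explicit $X_0$. Indeed $1.2(1-\epsilon)>1+\epsilon$ holds exactly when $\epsilon<1/11$, and this is the numerical margin the whole argument must achieve.

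For the finite range $25\le x<X_0$, it suffices to list the primes $p_k$ up to about $1.2X_0$ and check that consecutive primes satisfy $p_{k+1}<1.2\,p_k$ for all $p_k\ge29$. Then any $x\in[p_k,p_{k+1})$ has $p_{k+1}\in(x,1.2x)$. The only delicate spot is the start, since $29>1.2\cdot23$. For $x\in[25,29)$ I would argue directly: $29\in(x,1.2x)$ because $1.2x\ge30>29$. This finite verification is routine, and it completes the proof for all $x\ge25$.
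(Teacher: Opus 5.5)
The paper gives no proof of this statement. It quotes Nagura's theorem and uses it as a black box, only to dispose of the range $r\ge25$ in Proposition~\ref{prop:arith}.

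Your main argument is a valid alternative justification that derives the result from later, stronger explicit estimates. The two Rosser--Schoenfeld bounds you quote are correctly stated. Your inequality $\theta(1.2x)-\theta(x)>x\bigl(0.2-1.7/\log x\bigr)$ is right, and it is positive for $x>e^{8.5}\approx4915$, so $X_0=5000$ works.

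The open endpoint needs no ``shrinking''. The lower bound $y(1-1/\log y)$ is continuous in $y$, so it persists for the left limit $\theta(y^-)=\sum_{p<y}\log p$. Hence $\theta\bigl((1.2x)^-\bigr)-\theta(x)=\sum_{x<p<1.2x}\log p$ satisfies the same estimate.

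The finite range is a true and small check. All gaps between consecutive primes below $5003$ are at most $34$ (attained between $1327$ and $1361$), so $p_{k+1}<1.2p_k$ is automatic for $p_k>170$. Among $29\le p_k<170$ the tightest case is $37<1.2\cdot31$. The interval $[25,29)$ is handled by $29$, as you say.

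Compared with the citation, your route is self-contained modulo a standard reference. The price is that it relies on a later and considerably stronger theorem than the one being proved.

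Your fallback paragraph, however, is incorrect and should be removed. Chebyshev's weights $\frac{(30n)!\,n!}{(15n)!\,(10n)!\,(6n)!}$ give $\psi(x)\ge Ax-O(\log x)$ and $\psi(x)-\psi(x/6)\le Ax+O(\log x)$. Hence $\psi(x)\le\frac65Ax+O(\log^2x)$, where $A=\frac12\log2+\frac13\log3+\frac15\log5-\frac1{30}\log30\approx0.9213$.

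The upper constant $\frac65A\approx1.1056$ exceeds $1+\frac1{11}$, so the claimed quality $\epsilon<1/11$ is not achieved. More to the point, the ratio of the two constants is exactly $6/5$. In $\psi(1.2x)-\psi(x)$ the main terms therefore cancel, and only error terms of uncontrolled sign remain.

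Thus these bounds yield a prime in $(x,cx)$ for each fixed $c>6/5$ once $x$ is large, but nothing at $c=6/5$ itself. Reaching that endpoint is exactly the content of Nagura's theorem, and it requires sharper input than these weights.
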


\begin{theorem}[\cite{LS05}]\label{thm:LS}
Let $k\ge2$ and let $x$ be an integer satisfying
\[
 x\ge\max\left\{k+13,\frac{279}{262}k\right\}.
\]
Then
\[
  P^{+}\bigl((x+1)(x+2)\cdots(x+k)\bigr)>2k.
\]
\end{theorem}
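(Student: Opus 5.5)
This statement is an external result of Laishram and Shorey, and in the paper it is used only as a cited black box. If I had to prove it myself, I would follow the Sylvester--Erd\H{o}s approach. I expect that approach to reduce the claim to an analytic inequality for large $k$ plus a finite computation, with the constant $\frac{279}{262}$ being exactly what makes that inequality work.

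\emph{Setup.} Write $\Delta=(x+1)(x+2)\cdots(x+k)$ and suppose, for contradiction, that every prime factor of $\Delta$ is at most $2k$. Partition the primes into three ranges.
\begin{itemize}
\item Each prime $p\in(k,2k]$ divides at most one of the $k$ consecutive terms. Since $x\ge k+13$ gives $x+k<p^2$ for these $p$ (once $k$ is moderately large), it divides that term only to the first power.
\item Each prime $p\le k$ satisfies the usual Legendre-type estimate $\operatorname{ord}_p\binom{x+k}{k}\le\lfloor\log_p(x+k)\rfloor$.
\item For $p>\sqrt{x+k}$ this estimate becomes at most $1$.
\end{itemize}

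\emph{Deletion step.} I would use Erd\H{o}s's lemma: for each prime $p\le k$, delete one term of the product in which $p$ occurs to maximal power. The product of the remaining terms then divides $k!$. Combined with the hypothesis, every remaining term is composed only of primes at most $k$, except for the at most $\pi(2k)-\pi(k)$ terms carrying a prime from $(k,2k]$. Removing those terms as well leaves at least $k-\pi(2k)$ terms, each at least $x+1$, whose product divides $k!$. This gives
\[
(x+1)^{\,k-\pi(2k)}\le k!.
\]

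\emph{Analytic step.} I would compare this with Stirling's bound $k!<\sqrt{2\pi k}\,(k/e)^k e^{1/(12k)}$ and explicit Rosser--Schoenfeld bounds for $\pi(y)$.

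\emph{Main obstacle.} The crude inequality above is too weak when $x$ is only $\frac{279}{262}k$. Roughly, the gain $(x/k)^k e^k\approx e^{1.06k}$ is beaten by the loss $(x+1)^{\pi(2k)}\approx e^{2k\cdot(\text{const})}$. So the argument needs refining in two ways.
\begin{itemize}
\item Handle primes in $(\sqrt{x+k},k]$ more carefully. Each such prime divides only about $k/p+1$ terms, to the first power, so those terms contribute at most $\prod_{p\le k}p^{\lceil k/p\rceil}$ instead of a full factor $x+1$.
\item Use that $\prod_{p\le 2k}p<4^{2k}$ together with $\operatorname{ord}_p(k!)$, and bound $\binom{x+k}{k}\ge\left(\frac{x+k}{k}\right)^k$ against $\prod_{p\le2k}p^{\operatorname{ord}_p\binom{x+k}{k}}$.
\end{itemize}
With these refinements, the condition $x\ge\frac{279}{262}k$ should yield a contradiction for all $k$ beyond an explicit bound, presumably in the thousands.

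\emph{Finite range.} For the remaining small $k$, and for $x$ between $k+13$ and the analytic threshold, I would verify the claim by direct computation. For each $k$ it suffices to find, in each window $(x,x+k]$, a term having a prime factor greater than $2k$, for instance by exhibiting a prime in $(\max(x,2k),x+k]$. Nagura-type prime gap results such as Theorem~\ref{thm:nagura} cover most windows, and the rest are checked directly.

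I expect the refined prime-counting estimate to be the hardest step. It is where the explicit constant $\frac{279}{262}$ must emerge, and doing it carefully would reproduce the original paper's argument. In the present context I would simply invoke \cite{LS05}.
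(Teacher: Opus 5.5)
The paper gives no proof of this statement. It is quoted from \cite{LS05} and used only as a black box in Proposition~\ref{prop:arith}, so your closing decision to invoke \cite{LS05} is exactly what the paper does and is all that is needed. The sketch you offer as a self-contained route, however, would not prove the theorem as outlined. First, the reduction to an analytic threshold plus a finite computation fails for $k\in\{2,3,4\}$:
\begin{itemize}
\item There $\pi(2k)=k$, so the deletion inequality $(x+1)^{k-\pi(2k)}\le k!$ is vacuous for every $x$.
\item Your Legendre/primorial refinements only give $\binom{x+k}{k}\le(x+k)^{\pi(2k)}=(x+k)^k$, which is no contradiction.
\item So no threshold exists, and the set of $x$ to check is infinite.
\end{itemize}
For $k=2$ the claim says that no two consecutive integers $\ge16$ are both $3$-smooth, which is a statement about $2^a-3^b=\pm1$. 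For $k=3,4$ it concerns runs of consecutive $5$- and $7$-smooth integers. Size estimates cannot settle these; you need a St{\o}rmer-type theorem on consecutive smooth numbers, such as Lehmer's tables. For $k\ge5$ one has $k-\pi(2k)\ge1$, so the crude inequality does bound $x$.

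Second, the ``main obstacle'' is misidentified.
\begin{itemize}
\item If $x\le3k$, every term is at most $4k$, so a term with a prime factor $p>2k$ must equal $p$.
\item In that range the theorem is therefore \emph{equivalent} to the existence of a prime in $(\max\{x,2k\},x+k]$.
\item Any argument near $x\approx\frac{279}{262}k$ is thus a short-interval prime theorem in disguise. Chebyshev-level inputs such as $\prod_{p\le2k}p<4^{2k}$ are far too weak for that.
\end{itemize}
The constants do not come out of a sharpened Sylvester--Erd\H{o}s estimate; prime gaps dictate them.
\begin{itemize}
\item At $(x,k)=(69,57)$, every prime factor of $70\cdot71\cdots126$ is at most $113<2k$, since there is no prime between $113$ and $127$. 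This point just fails $x\ge k+13$.
\item At $(x,k)=(278,262)$, every prime factor of $279\cdots540$ is at most $523<2k$, since there is no prime between $523$ and $541$. This point just fails $x\ge\frac{279}{262}k$.
\end{itemize}
So the workable split is by the size of $x/k$, not of $k$. For $k\ge5$, use deletion only to confine $x$ to a range $x\le C_kk$. On that range, exhibit a prime in $(\max\{x,2k\},x+k]$ using explicit short-interval prime results plus a finite check. Finally, your claim that $x\ge k+13$ forces $x+k<p^2$ for $p\in(k,2k]$ is backwards, since $x$ is bounded only from below. It is harmless only because those terms are discarded anyway.
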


\subsection{Weak Sidon sets}
A subset $S$ of an abelian group $\Gamma$ is a \emph{weak Sidon set} if
\[
  a+b=c+d,\qquad a\ne b,\quad c\ne d,
\]
with $a,b,c,d\in S$, implies $\{a,b\}=\{c,d\}$. Equivalently, the sums indexed
by the $2$-element subsets of $S$ are pairwise distinct. This notion was
studied by Ruzsa \cite{Ruz93} and appeared earlier under the name of well-spread
sets \cite{Kot72}. Let
$\mathrm{Ord}(\Gamma,2)$ denote the set of elements of order two in $\Gamma$.
The following formulation of the Haanp\"a\"a--\"Osterg\r{a}rd bound is taken
from Bajnok's restatement \cite{Baj18}.

\begin{theorem}[\cite{HO07}]\label{thm:HO}
Let $\Gamma$ be an abelian group, and let $S\subseteq\Gamma$ be a
weak Sidon set. Then
\[
  |S|\le
  \left\lfloor
    \frac{\sqrt{4|\Gamma|+4|\mathrm{Ord}(\Gamma,2)|+5}+3}{2}
  \right\rfloor.
\]
\end{theorem}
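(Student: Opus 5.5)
The plan is a double-counting argument on ordered differences rather than on sums. Counting the $\binom{|S|}2$ pairwise sums, which are distinct by definition, only gives $|S|\lesssim\sqrt{2|\Gamma|}$. The stated bound is of order $\sqrt{|\Gamma|}$, so differences are needed. Write $k=|S|$, $N=|\Gamma|$ and $t=|\mathrm{Ord}(\Gamma,2)|$. Note that the claimed inequality is equivalent to $k^2-3k+1\le N+t$, because $\frac{\sqrt{4N+4t+5}+3}{2}$ is the larger root of $k^2-3k+1-(N+t)=0$ and $k$ is an integer. So the target is a quadratic inequality in $k$.

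\textbf{Step 1 (structure of coincident differences).} Consider the $k(k-1)$ ordered pairs $(x,y)\in S^2$ with $x\ne y$, and the map $(x,y)\mapsto x-y\in\Gamma\setminus\{0\}$. Suppose $x-y=z-w$ with $(x,y)\ne(z,w)$. Then $x+w=z+y$. If both $x\ne w$ and $z\ne y$, the weak Sidon property forces $\{x,w\}=\{z,y\}$. The option $x=z$ would give $y=w$, which is excluded, and the option $x=y$ is excluded by assumption. Hence every coincidence has $x=w$ or $z=y$. In the first case $2x=z+y$ with $z\ne y$, so $x$ is the middle term of the three-term progression $y,x,z$ with common difference $d=x-y$. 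The other case is symmetric. Degenerate coincidences, where $z=y$ and $w=x$ so that $2d=0$, come from the pair $(x,y),(y,x)$ and occur only for $d\in\mathrm{Ord}(\Gamma,2)$.

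\textbf{Step 2 (bounding the excess).} Let $E$ be the excess $k(k-1)-|\{x-y: x\ne y\}|$, so that $k(k-1)\le (N-1)+E$. By Step 1, each nondegenerate coincidence is charged to a middle element $x\in S$ with $2x=y+z$, $y\ne z\in S$. Since sums of distinct pairs in $S$ are distinct, for each $x$ there is at most one unordered pair $\{y,z\}$ with $y+z=2x$. Such a progression yields only the coincidences $x-y=z-x$ and $y-x=x-z$, contributing at most $2$ to $E$ per $x$. Degenerate coincidences add at most one extra unit per element of order two, since each $d$ with $2d=0$ gets at most one such extra collision once the weak Sidon property is used again. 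Putting these together should give $E\le 2k+t$. That yields $k^2-k\le N-1+2k+t$, that is, $k^2-3k+1\le N+t$, which is exactly the claim.

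\textbf{Main obstacle.} The delicate part is the bookkeeping in Step 2, making sure no fibre of the difference map is overcounted. I must check the following. First, a fibre of size $3$ (a chain $x,x-d,x-2d,x-3d$) would require a second coincidence that is impossible unless $3d=0$, and that case must still be absorbed by the middle-element count. Second, the order-two fibres, where $(x,y)$ and $(y,x)$ collide, do not interact with the progression coincidences. I would first try to show directly that every fibre has size at most $2$, and then injectively assign each size-$2$ fibre either to a pair (middle element $x$, sign) or to an element of $\mathrm{Ord}(\Gamma,2)$. If the constant comes out off by one, I would tighten it by observing that $0$ is never a difference and that the element $0$ of $\mathrm{Ord}$-type is not counted in $t$. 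Since this theorem is quoted from \cite{HO07} via \cite{Baj18}, within the paper it suffices to cite it, and this sketch only indicates why the bound holds.
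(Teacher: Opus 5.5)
The paper gives no proof of this bound. It imports it from \cite{HO07} in Bajnok's formulation \cite{Baj18} and applies it only with $|\Gamma|=36$ and $|\mathrm{Ord}(\Gamma,2)|=3$. Your difference-counting argument is a correct, self-contained proof.

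The bookkeeping you flag as delicate closes cleanly if you bound $E$ by the number of coincident unordered pairs rather than by fibre sizes. A fibre of size $s$ contributes $s-1\le\binom s2$ to $E$.
\begin{itemize}
\item By your Step~1, a nondegenerate coincident pair has the form $\{(m,m-d),(m+d,m)\}$ with $m,m\pm d\in S$ and $2d\ne0$, and it is determined by $(m,d)$. For fixed $m$, the unique pair $\{y,z\}$ with $y+z=2m$ allows only $d\in\{m-y,m-z\}$. So there are at most $2k$ such pairs.
\item A degenerate pair is $\{(x,y),(y,x)\}$ with $x-y\in\mathrm{Ord}(\Gamma,2)$. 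Step~1 shows that any two elements $(y_i+d,y_i)$ and $(y_j+d,y_j)$ of a fibre satisfy $y_j-y_i=\pm d$. When $2d=0$ this leaves room for at most two elements, so there are at most $t$ degenerate pairs.
\end{itemize}
Hence $E\le 2k+t$, and therefore $k^2-3k+1\le|\Gamma|+t$, exactly as you intend.

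One correction to your plan: do not try to show that every fibre has size at most $2$, because that is false. If $3d=0\ne d$ and $\{a,a+d,a+2d\}\subseteq S$, which is compatible with the weak Sidon property, then the fibre over $d$ is $\{(a+d,a),(a+2d,a+d),(a,a+2d)\}$. The pair count above absorbs this case automatically. Such a fibre contributes $2$ to $E$ but accounts for three incidences $(m,d)$, namely $m\in\{a,a+d,a+2d\}$.

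Compared with the paper's bare citation, your route costs about half a page. In return, the nonexistence of a distance magic labeling in the exceptional case $(9,2)$ becomes fully self-contained. It also makes transparent where the term $3k$ (at most one three-term progression through each element) and the term $t$ (the pairs $(x,y),(y,x)$ whose difference has order two) come from.
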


\section{Affinely generating magic maps and distance magic labelings}
\label{sec:mainproof}

We first combine Theorem~\ref{thm:kneser-reduced-smith} with the embedding
criterion.

\begin{corollary}\label{cor:kneser-affine}
Let $r\ge1$ and $n\ge2r$, and let $\Gamma$ be a finite abelian group. Then
$K(n,r)$ admits an affinely generating $\Gamma$-magic map if and only if
\[
  \Gamma\hookrightarrow
  \bigoplus_{j=1}^{r}
  \left(\mathbb Z/\binom{n-r-j}{r-j}\mathbb Z\right)^{m_j},
  \qquad
  m_j=\binom nj-\binom n{j-1}.
\]
In particular, affine generation implies
\[
  \ex(\Gamma)\mid
  L_{n,r}:=\operatorname{lcm}_{1\le j\le r}
  \binom{n-r-j}{r-j}.
\]
\end{corollary}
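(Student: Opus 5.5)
The corollary follows by direct substitution of Theorem~\ref{thm:kneser-reduced-smith} into the embedding criterion of Theorem~\ref{thm:reduced-criterion}, so the plan is to check the hypotheses of that criterion for $G=K(n,r)$ and then transfer the conclusion along the explicit isomorphism.

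First, I will verify the hypotheses. As in the proof of Theorem~\ref{thm:kneser-reduced-smith}, $K(n,r)$ is regular of degree $a_0=\binom{n-r}{r}$. Since $n\ge2r$, we have $n-r\ge r$, so $a_0\ge1$ and the degree is positive. Theorem~\ref{thm:kneser-reduced-smith} also asserts that $\overline A$ is nonsingular over $\mathbb Q$. Hence Theorem~\ref{thm:reduced-criterion} applies. It says that $K(n,r)$ admits an affinely generating $\Gamma$-magic map if and only if $\Gamma\hookrightarrow\Sred(K(n,r))$.

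Next, I will replace $\Sred(K(n,r))$ by the explicit group. Let $\psi\colon\Sred(K(n,r))\to\bigoplus_{j=1}^{r}(\mathbb Z/a_j\mathbb Z)^{m_j}$ be the isomorphism from Theorem~\ref{thm:kneser-reduced-smith}. Composing an injection $\Gamma\hookrightarrow\Sred(K(n,r))$ with $\psi$ gives an injection into the explicit sum. Composing an injection into the explicit sum with $\psi^{-1}$ gives an injection back into $\Sred(K(n,r))$. So the two embedding conditions are equivalent, which proves the displayed equivalence.

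For the exponent statement, suppose an affinely generating map exists. Theorem~\ref{thm:reduced-criterion} then gives $\ex(\Gamma)\mid\ex(\Sred(K(n,r)))$. By the last assertion of Theorem~\ref{thm:kneser-reduced-smith}, this exponent equals $\operatorname{lcm}_{1\le j\le r}|\lambda_j|=\operatorname{lcm}_{1\le j\le r}a_j=L_{n,r}$.

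I do not expect a real obstacle. The only point needing care is the positive-degree hypothesis, which is exactly where $n\ge2r$ is used to guarantee $a_0\ge1$.
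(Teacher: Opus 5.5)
Your proposal is correct and follows the paper's proof: the equivalence comes from substituting Theorem~\ref{thm:kneser-reduced-smith} into Theorem~\ref{thm:reduced-criterion}. For the exponent you use the criterion's exponent clause together with the exponent formula, which is equivalent to the paper's remark that divisibility follows from the subgroup embedding, and your explicit check that the degree $a_0=\binom{n-r}{r}\ge1$ is positive fills in a hypothesis the paper leaves implicit.
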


\begin{proof}
The equivalence follows from Theorem~\ref{thm:kneser-reduced-smith} and
Theorem~\ref{thm:reduced-criterion}. The exponent divisibility follows from
the subgroup embedding.
\end{proof}

For groups of order $\binom nr$, Corollary~\ref{cor:prime-obstruction} and
Theorem~\ref{thm:kneser-reduced-smith} show that affine generation forces
every prime divisor of $\binom nr$ to divide $L_{n,r}$. We use the following
two consequences of this condition.

\begin{lemma}\label{lem:primediv}
Let $r\ge2$ and $n>2r$. Assume every prime divisor of $\binom{n}{r}$ divides
$L_{n,r}$. Then
\begin{enumerate}[label=\emph{(\alph*)},nosep,leftmargin=2.2em]
\item the interval $(n-r,n]$ contains no prime;
\item
\[
 P^{+}\bigl(n(n-1)\cdots(n-r+1)\bigr)<2r.
\]
\end{enumerate}
\end{lemma}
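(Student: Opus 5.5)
The plan is to study which primes can divide $L_{n,r}$. Every factor is $a_j=\binom{n-r-j}{r-j}$ for some $1\le j\le r$, and each $a_j$ divides $(n-r-j)!/(n-2r)!$. Hence every prime divisor of $L_{n,r}$ divides a product of integers lying in $[n-2r+1,\,n-r-1]$; these are the numerators of the binomials. Write $a_j=\binom{n-r-j}{n-2r}$. The top argument of $a_j$ is at most $n-r-1$, so every prime factor of $a_j$ is at most $n-r-1$. This already settles (a): a prime $p$ with $n-r<p\le n$ divides the numerator $n(n-1)\cdots(n-r+1)$ of $\binom nr$. It does not divide $r!$, because $p>n-r\ge r$. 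So $p\mid\binom nr$. But $p>n-r-1$, so $p$ divides no $a_j$ and hence does not divide $L_{n,r}$, contradicting the hypothesis.

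For (b), I argue by contradiction. Suppose some prime $p\ge 2r$ divides $n(n-1)\cdots(n-r+1)$, say $p\mid n-i$ with $0\le i\le r-1$. Since $p\ge 2r>r$, $p$ does not divide $r!$. Among the $r$ consecutive integers $n-r+1,\dots,n$, a prime $p>r$ divides at most one, and then with the same multiplicity $v_p(n-i)$. So $v_p\binom nr=v_p(n-i)\ge1$, and by hypothesis $p\mid a_j$ for some $j\in[1,r]$. Then $p$ divides some integer $m$ in the numerator range $[n-2r+1,\,n-r-1]$ of $a_j$; more precisely, $a_j=\binom{n-r-j}{r-j}$ has numerator $(n-r-j)(n-r-j-1)\cdots(n-2r+1)$ and denominator $(r-j)!$. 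Now $m$ and $n-i$ are both multiples of $p$, and their difference satisfies $1\le (n-i)-m\le (r-1)-(-2r+1)$, i.e. at most $2r-2$. That difference is a positive multiple of $p\ge 2r$, which is impossible. This contradiction proves (b). Note $n-i>m$ always holds, since $m\le n-r-1<n-r+1\le n-i$.

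The main point to check carefully is that the prime factors of $a_j$ really lie among numerator factors in $[n-2r+1,n-r-1]$. This holds since $p\mid a_j$ implies $p$ divides the numerator product, hence one of its factors. Also $n>2r$ guarantees that this range consists of positive integers. The hypothesis $r\ge2$ is not essentially needed beyond making the range statements meaningful. Part (a) is also subsumed by the same difference argument whenever $p\ge2r$; the direct argument above covers all primes in $(n-r,n]$ regardless.
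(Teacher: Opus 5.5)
Your proof is correct and follows essentially the paper's argument: in (b) the paper likewise writes a numerator factor of some $a_j$ as $n-b$ and uses $p\mid b-a$ with $0<b-a\le 2r-1$. Your multiplicity computation of $v_p\binom nr$ is more than you need, since $p\nmid r!$ already gives $p\mid\binom nr$. There is one arithmetic slip that does not affect the result: $(n-i)-m$ can reach $2r-1$ (take $i=0$, $m=n-2r+1$), not just $2r-2$, and $(r-1)-(-2r+1)$ actually equals $3r-2$. Since $2r-1<2r\le p$, the contradiction still goes through.
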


\begin{proof}
(a) Suppose a prime $p$ satisfies $n-r<p\le n$. Since $n>2r$, we have
$p>n-r>r$. Thus $p$ divides neither $r!$ nor $(n-r)!$, whereas $p\mid n!$,
so $p\mid\binom{n}{r}$. By hypothesis, $p\mid L_{n,r}$, and hence
$p\mid\binom{n-r-j}{r-j}$ for some $1\le j\le r$. But
$n-r-j<p$, so none of the factors in the numerator of this binomial coefficient
is divisible by $p$, a contradiction.

(b) Let $p$ be a prime divisor of $n(n-1)\cdots(n-r+1)$, say $p\mid n-a$ with
$0\le a\le r-1$. If $p\le r$, then $p<2r$. Suppose instead that $p>r$.
Then $p\nmid r!$, so $p\mid\binom{n}{r}$ and hence $p\mid L_{n,r}$. Thus
$p\mid\binom{n-r-j}{r-j}$ for some $1\le j\le r-1$. Since
$p>r\ge r-j$, the denominator $(r-j)!$ is coprime to $p$, so $p$ divides one
of the numerator factors
\[
  n-r-j,\ n-r-j-1,\dots,\ n-2r+1.
\]
Hence $p\mid n-b$ for some $b$ with $r+j\le b\le2r-1$. In particular,
$r+1\le b\le2r-1$. Therefore $p\mid b-a$, and since
$0<b-a\le2r-1$, we obtain $p\le2r-1$.
\end{proof}

\begin{proposition}\label{prop:arith}
Let $\Gamma$ be an abelian group of order $\binom{n}{r}$. If $K(n,r)$ admits an
affinely generating $\Gamma$-magic map, then
\[
  (n,r)=(9,2)
  \qquad\text{and}\qquad
  \Gamma\cong\mathbb Z/6\mathbb Z\oplus\mathbb Z/6\mathbb Z.
\]
\end{proposition}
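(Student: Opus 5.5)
We will split into cases according to $r$ and the relation between $n$ and $r$. Throughout we use Corollary~\ref{cor:prime-obstruction} together with Theorem~\ref{thm:kneser-reduced-smith}: every prime divisor of $\binom nr$ must divide $L_{n,r}$.

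\emph{Degenerate cases.} If $r=1$, then $L_{n,1}=\binom{n-2}{0}=1$, so $\Sred$ is trivial. But $|\Gamma|=n\ge2$, so $\Gamma$ cannot embed. If $n=2r$, then every $a_j=\binom{r-j}{r-j}=1$, so again $\Sred=0$, while $|\Gamma|=\binom{2r}{r}>1$. Hence we may assume $r\ge2$ and $n>2r$, and Lemma~\ref{lem:primediv} applies.

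\emph{Large parameters.} Write $x=n-r$ and $k=r$, so that $n(n-1)\cdots(n-r+1)=(x+1)\cdots(x+k)$. If $x\ge\max\{k+13,\tfrac{279}{262}k\}$, Theorem~\ref{thm:LS} gives $P^+>2r$, which contradicts Lemma~\ref{lem:primediv}(b). It remains to handle $x<k+13$ or $x<\tfrac{279}{262}k$, that is, $n<2r+13$ or $n<\tfrac{541}{262}r$.

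\emph{Narrow range.} For the second range, and more generally whenever $n-r$ is large, we use Lemma~\ref{lem:primediv}(a). If $n-r\ge25$ and $1.2(n-r)\le n$, that is, $r\ge0.2(n-r)$, then Theorem~\ref{thm:nagura} places a prime in $(n-r,1.2(n-r))\subseteq(n-r,n]$, a contradiction. Since $n>2r$ forces $n-r>r$, the range $n<\tfrac{541}{262}r$ gives $n-r<\tfrac{279}{262}r$, hence $r>0.2(n-r)$ comfortably. So only $n-r<25$ survives there. Combining with the range $n<2r+13$, we are left in every case with $x=n-r\le\max(24,\,r+12)$. When $x\ge25$ and $x\le r+12$, we have $r\ge x-12\ge0.2x$, so Nagura applies again. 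Hence only $n-r\le24$ with $n>2r$ remains, and then $r\le 23$ and $n\le 47$: a finite list.

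\emph{Finite check and the exceptional case.} For this finite list we check Lemma~\ref{lem:primediv}(a) and~(b); in most cases a prime in $(n-r,n]$ disposes of the pair, and the few survivors we test directly against $L_{n,r}$. This bookkeeping is the main obstacle, since it must be organized so as to be verifiable by hand. We expect only $(9,2)$ to survive. Here $\binom92=36$, $a_1=\binom61=6$, $a_2=1$, and $L=6$. An abelian group of order $36$ with exponent dividing $6$ must be $\mathbb Z/6\oplus\mathbb Z/6$, by Corollary~\ref{cor:kneser-affine}. Other candidates such as $(10,3)$, with $\binom{10}3=120$ and $L=\operatorname{lcm}(\binom62,\binom51)=15$, fail: they fail either on primes (for $(10,3)$, $2\nmid15$) or because the exponent bound forces $|\Gamma|<\binom nr$, given that $\exp(\Gamma)^{\text{rank}}$ must reach $|\Gamma|$ with rank at most $m_1$.
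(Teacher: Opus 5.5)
There is a genuine gap: the finite verification, which is where the proposition is actually decided, is never carried out. Your reduction to a finite range is correct and only cosmetically different from the paper's. You split on $x=n-r\ge25$ rather than on $r\ge25$, but both arguments feed the Laishram--Shorey bound into Nagura's theorem via Lemma~\ref{lem:primediv}(a). After that, however, ``we expect only $(9,2)$ to survive'' is just the conclusion of the proposition restated.

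What is missing is an organizing step that makes the check short. The paper's step goes as follows.
\begin{itemize}
\item Lemma~\ref{lem:primediv}(a) says that $n-r+1,\dots,n$ are $r$ consecutive composite numbers. Below $60$, and a fortiori below your bound $47$, the longest such run has length $5$ (for example $24$--$28$ and $32$--$36$), so $r\le5$.
\item Next, keep the sharper bound $n\le2r+12$, which you had but threw away when passing to $n-r\le24$. It holds for all $r\le200$, since there $\tfrac{541}{262}r<2r+13$. It gives $n\le22$ for $r\le5$.
\item There is no run of four composites up to $22$, so only $r\in\{2,3\}$ remain. The candidates are $(9,2),(10,2),(15,2),(16,2),(10,3),(16,3)$, with $L_{n,r}=6,7,12,13,15,66$ respectively.
\item Comparing the prime divisors of $\binom nr$ with $L_{n,r}$ eliminates every candidate except $(9,2)$.
\end{itemize}
If you work only with $n-r\le24$, further pairs survive part (a): $(21,2)$, $(22,2)$, $(25,2)$, $(26,2)$, $(22,3)$, $(26,3)$, $(27,3)$, $(27,4)$, $(28,4)$, $(28,5)$. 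Each of these must then be killed separately, for instance by Lemma~\ref{lem:primediv}(b), since each has $P^{+}(n(n-1)\cdots(n-r+1))\ge 5>2r$ or $13>2r$ as appropriate.

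Your fallback mechanism is also misstated. The $p$-rank of $\Sred(K(n,r))$ is $\sum_{j:\,p\mid a_j}m_j$, which can exceed $m_1$ when $p$ divides several of the $a_j$. So ``rank at most $m_1$'' is not a valid constraint on $\Gamma$. It happens not to matter, because the prime-divisor test alone already leaves only $(9,2)$, but your proposal does not establish that. Your handling of the degenerate cases $r=1$ and $n=2r$, and your identification of $\Gamma\cong\mathbb Z/6\mathbb Z\oplus\mathbb Z/6\mathbb Z$ from $|\Gamma|=36$ and $\ex(\Gamma)\mid6$, are fine.
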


\begin{proof}
Corollary~\ref{cor:kneser-affine} first excludes the boundary cases. If $r=1$,
then $L_{n,1}=1$. If $n=2r$, then every binomial coefficient in $L_{n,r}$ is
$1$. In either case affine generation would force $\ex(\Gamma)=1$, which is
impossible because $|\Gamma|=\binom nr>1$. Hence $r\ge2$ and $n>2r$.

By Corollary~\ref{cor:prime-obstruction} and
Theorem~\ref{thm:kneser-reduced-smith}, every prime divisor of
$\binom nr$ divides $L_{n,r}$. Thus Lemma~\ref{lem:primediv} applies. We use
Theorem~\ref{thm:LS} with $k=r$ and $x=n-r$. If
\[
  n-r\ge\max\left\{r+13,\frac{279}{262}r\right\},
\]
then
\[
  P^{+}\bigl(n(n-1)\cdots(n-r+1)\bigr)>2r,
\]
contrary to Lemma~\ref{lem:primediv}(b). Hence
\begin{equation}\label{eq:nbound}
  n<\max\left\{2r+13,\frac{541}{262}r\right\}.
\end{equation}

Suppose first that $r\ge25$. Both quantities on the right of
\eqref{eq:nbound} are smaller than $6r$, so $n<6r$ and hence
$1.2(n-r)<n$. Put $x=n-r$. Since $x>r\ge25$,
Theorem~\ref{thm:nagura} gives a prime $p$ with
\[
  n-r=x<p<1.2x<n,
\]
contradicting Lemma~\ref{lem:primediv}(a).

It remains to consider $2\le r\le24$. In this range
$\frac{541}{262}r\le2r+13$, so \eqref{eq:nbound} gives
$n\le2r+12\le60$. By Lemma~\ref{lem:primediv}(a), the $r$ consecutive integers
\[
  n-r+1,n-r+2,\dots,n
\]
are all composite. A direct check of the integers up to $60$ shows that the
maximum length of a block of consecutive composites is $5$, and that all
blocks of this length are
\[
  24,25,26,27,28;\qquad
  32,33,34,35,36;\qquad
  48,49,50,51,52;\qquad
  54,55,56,57,58.
\]
Thus $r\le5$. For $r\in\{4,5\}$ we have $n\le22$, and there is no run of four
consecutive composites up to $22$. Hence only $r=2,3$ remain.

For $r=2$, the values $n\le16$ for which $n-1$ and $n$ are both composite are
$9,10,15,16$. For $r=3$, the values $n\le18$ for which $n-2,n-1,n$ are all
composite are $10,16$. We now test the prime-divisor obstruction from
Corollary~\ref{cor:prime-obstruction}, using
Theorem~\ref{thm:kneser-reduced-smith}:
\[
\begin{array}{c|c|c|c|c}
(n,r)&\binom nr&L_{n,r}&\text{prime divisors of }\binom nr
 &\text{all divide }L_{n,r}\\ \hline
(9,2)  &36 &6  &2,3   &\text{yes}\\
(10,2) &45 &7  &3,5   &\text{no}\\
(15,2) &105&12 &3,5,7 &\text{no}\\
(16,2) &120&13 &2,3,5 &\text{no}\\
(10,3) &120&15 &2,3,5 &\text{no}\\
(16,3) &560&66 &2,5,7 &\text{no}
\end{array}
\]
Thus $(n,r)=(9,2)$ is the only possible pair.

In this case $|\Gamma|=36$ and $L_{9,2}=6$. Corollary~\ref{cor:kneser-affine}
gives $\ex(\Gamma)\mid6$, while Lemma~\ref{lem:radical} shows that both $2$ and
$3$ divide $\ex(\Gamma)$. Hence $\ex(\Gamma)=6$. The unique abelian group of
order $36$ and exponent $6$ is
\[
  \Gamma\cong\mathbb Z/6\mathbb Z\oplus\mathbb Z/6\mathbb Z.\qedhere
\]
\end{proof}

\begin{proof}[\textnormal{\textbf{Proof of Theorem~\ref{thm:kneser-labeling}.}}]
\emph{(a)} The forward implication is Proposition~\ref{prop:arith}. Conversely, suppose
that $(n,r)=(9,2)$ and
$\Gamma\cong\mathbb Z/6\mathbb Z\oplus\mathbb Z/6\mathbb Z$.
Theorem~\ref{thm:kneser-reduced-smith} gives
\[
  \Sred(K(9,2))
  \cong
  (\mathbb Z/6\mathbb Z)^8.
\]
Thus $\Gamma\hookrightarrow\Sred(K(9,2))$, and
Corollary~\ref{cor:kneser-affine} gives an affinely generating
$\Gamma$-magic map.

\medskip
\noindent\emph{(b)} Assume that $K(n,r)$ admits a $\Gamma$-distance magic labeling $\ell$, where
$|\Gamma|=\binom{n}{r}$. The labeling $\ell$ is affinely generating, so
part~\emph{(a)} gives
\[
  (n,r)=(9,2),\qquad
  \Gamma\cong\mathbb Z/6\mathbb Z\oplus\mathbb Z/6\mathbb Z.
\]
For each $i\in[9]$, put
\[
  T_i=\sum_{\substack{u\in\binom{[9]}2\\i\in u}}\ell(u),
\]
and put
\[
  \sigma=\sum_{u\in\binom{[9]}2}\ell(u).
\]
Let $\mu$ be the magic constant. If $\{i,j\}$ is a vertex, then its
neighbors are precisely the pairs containing neither $i$ nor $j$. Therefore
\[
  \mu=\sigma-T_i-T_j+\ell(\{i,j\}),
\]
and hence
\[
  \ell(\{i,j\})=\mu-\sigma+T_i+T_j.
\]
Since $\ell$ is bijective, the $36$ sums
\[
  T_i+T_j,\qquad 1\le i<j\le9,
\]
are pairwise distinct. If $T_i=T_j$ for some $i\ne j$, then choosing
$h\notin\{i,j\}$ would give $T_i+T_h=T_j+T_h$, a contradiction. Hence
$T_1,\dots,T_9$ are pairwise distinct, and
\[
  \mathcal T=\{T_1,\dots,T_9\}
\]
is a weak Sidon set of size $9$ in $\Gamma$.

The group $\mathbb Z/6\mathbb Z\oplus\mathbb Z/6\mathbb Z$ has exactly three
elements of order two. Therefore
Theorem~\ref{thm:HO} gives
\[
  |\mathcal T|
  \le
  \left\lfloor
    \frac{\sqrt{4\cdot36+4\cdot3+5}+3}{2}
  \right\rfloor
  =\left\lfloor\frac{\sqrt{161}+3}{2}\right\rfloor
  =7,
\]
contradicting $|\mathcal T|=9$.
\end{proof}


\end{document}